\renewcommand{\div}{\nabla\cdot}
\newcommand{\grad}{\nabla}
\newcommand{\Br}{{\bf x}}
\newcommand{\Bu}{{\bf u}}
\newcommand{\Bx}{{\bf x}}
\newcommand{\Bk}{{\bf k}}
\newcommand{\Bv}{{\bf v}}
\newcommand{\BA}{{\bf A}}
\newcommand{\BR}{{\bf R}}
\newcommand{\BE}{{\bf E}}
\newcommand{\BJ}{{\bf J}}
\renewcommand{\BR}{{\bf R}}
 \newcommand{\bb}{\mathbf b}
\newcommand{\bk}{\mathbf k} 
 \newcommand{\bx}{\mathbf x}
 \newcommand{\bJ}{\mathbf J}
\DeclareMathOperator*{\argmin}{arg\,min}
\newcommand{\YZ}[1]{\textcolor{black}{#1} }
\newcommand{\WL}[1]{\textcolor{black}{#1} }
\newtheorem{thm}{Theorem}
\newtheorem{prop}[thm]{Proposition}
\newtheorem{coro}[thm]{Corollary}
\newtheorem{remark}[thm]{Remark}
\newtheorem{hypo}[thm]{Hypothesis}
\begin{document}

\title[Acousto-electric Inverse Source Problems]{Acousto-electric Inverse Source Problem
}

\author{Wei Li, John C. Schotland, Yang Yang, Yimin Zhong}
\address{Department of Mathematical Sciences, DePaul University, Chicago, IL 60604}
\email{wei.li@depaul.edu}
\address{Department of Mathematics, Yale University, New Haven, CT 06511}
\email{john.schotland@yale.edu}
\address{Department of Computational Mathematics, Science and Engineering, Michigan State University, East Lansing, MI 48824}
\email{yangy5@msu.edu}
\address{Department of Mathematics, Duke University, Durham, NC 27708}
\email{yimin.zhong@duke.edu}

\date{\today}

\begin{abstract}
We propose a method to reconstruct the electrical current density inside a conducting medium from acoustically-modulated boundary measurements of the electric potential. We show that the current can be uniquely reconstructed with Lipschitz stability. We also perform numerical simulations  to illustrate the analytical results, and explore the partial data setting when measurements are taken only on part of the boundary.
\end{abstract}

\maketitle

\section{Introduction}

Electroencephalography is widely used in neurology and neurosurgery to monitor the electrical activity of the human brain~\cite{Cohen2014,Shorvon2013,Baltuch2020}. In a typical clinical setting, the electrical signal is recorded from electrodes that are placed either on the scalp or surgically implanted in the brain. In either case, the objective is to locate and characterize the current source that produces the measured signal. An important application is to the localization of seizure foci 
in patients undergoing epilepsy surgery. In mathematical terms, this problem is closely related to the inverse problem of reconstructing the electrical current density of a conducting medium from boundary measurements. It is well known, however, that the inverse source problem is underdetermined and does not admit a unique solution~\cite{Bleistein_1977, Dassios_2005, Devaney_1973,Fokas_1996,Albanese_2006}. That is, more than one source gives rise to the same measurements. This problem may be overcome, to some extent, if {\em a priori} information about the source is known. For instance, if the source consists of a single current dipole (or even a fixed number of dipoles), then its position and strength can be uniquely determined {\cite{grech2008review,schimpf2002dipole,Ammari_2002,zhao2018inverse}}. However, electrical activity in the brain is distributed across networks of neurons of unknown structure, and thus this state of affairs is highly unsatisfactory

In this work we consider an alternative approach to the inverse source problem which, in some sense, is in the spirt of several recently proposed hybrid imaging modalities~\cite{Ammari_2008,Bal_2010, Capdeboscq_2009,Gebauer_2008,Kuchment_2011,Nachman,Triki_2010}. In these methods, a wavefield is used to control the material properties of a medium of interest, which is then probed by a second wavefield~\cite{Bal_2010,Bal_2014,Bal_2016,Chung_2017,Chung_2020_1,
Chung_2020_2,Schotland_2020,kuchment2012stabilizing, kempe1997acousto, 
Chung2021, li2019hybrid,li2020inverse}. Here we exploit the {acousto-electric} effect, in which the density of charge carriers and conductivity are spatially modulated by an acoustic wave~\cite{Lavandier_2000,Parmenter_1953}. We find that it is possible to {\em uniquely} recover the current density from  boundary measurements of the electrical potential. Moreover, the stability of the reconstruction is shown to be Lipschitz, which provides mathematical justification for the use of acoustic modulation in the electrical inverse source problem.

The remainder of this paper is organized as follows. In Section~2 we introduce a model for the acousto-electric effect. This model is used as the basis for the treatment of the acoustically-modulated inverse source problem that is taken up in Section~3. We show that the boundary measurements in the presence of acoustic modulation lead to knowledge of an internal functional, from which the current source may be recovered. In Section~4 our results are illustrated by numerical simulations , including the cases of full and partial boundary measurements, along with an alternating minimization algorithm that improves numerical stability.  Finally, our conclusions are presented in Section~5.

\section{Acousto-electric effect}
We begin by developing a simple model for the acousto-electric effect, following the approach of~\cite{Bal_2010}.  Consider a system of conducting particles and charge carriers in a fluid. If a small-amplitude acoustic wave is incident on the system, each particle will oscillate about its local equilibrium position. We may thus regard the particles as independent. It follows
that the equation of motion of a single particle is of the form
\begin{equation}
\label{eq_motion_u}
\rho \frac{d \Bu}{dt} =  -\grad p \ ,
\end{equation}
where $\Bu$ is the velocity of the particle, $\rho$ is its mass density, and $p$ is the pressure in the fluid. We consider a standing time-harmonic plane wave of frequency $\omega$ with
\begin{equation}
p = A\cos(\omega t) \cos(\Bk\cdot\Br + \varphi) \ ,
\end{equation}
where $A$ is the amplitude of the wave, $\Bk$ is its wave vector and $\varphi$ is the phase.
For simplicity, we have assumed that the speed of sound $c_s$ is constant with {$|\Bk|=\omega/c_s$}. The oscillatory solution to (\ref{eq_motion_u}) is given by
\begin{equation}
\Bu =  \frac{A }{\rho\omega} \sin(\omega t) \sin(\Bk\cdot\Br + \varphi) \Bk \ .
\end{equation}
Thus apart from a transient, the particle moves with the fluid.

In the presence of an applied field, the charge carriers move and generate a current. The current density $\BJ_\epsilon$ is of the form
\begin{equation}
\BJ_\epsilon = \sum_i q_i \Bv_i \delta(\Br -\BR_i(t)) \ ,
\end{equation}
where $\BR_i$ is the position of the $i$th charge carrier, $\Bv_i$ is its velocity and $q_i$ is the charge. Since each particle is independent, it follows from integration
of the equations of motion (\ref{eq_motion_u}), that $\BJ_\epsilon$ is given by
\begin{equation}
\label{def_J}
\BJ_\epsilon(\Br)=\BJ_0(\Br)\left[1+ \epsilon\cos(\Bk\cdot\Br + \varphi)\right] \ ,
\end{equation}
where $\BJ_0$ is the current in the absence of the acoustic wave and $\epsilon=A/(\rho c_s^2)\ll 1$ is a small parameter. 
The conductivity $\sigma_\epsilon$ of the medium is proportional to the density of conducting particles and is given by 
\begin{equation}
\label{def_sigma}
\sigma_\epsilon(\Br) = \sigma_0(\Br)\left[1+ \epsilon\beta\cos(\Bk\cdot\Br + \varphi)\right] \ ,
\end{equation}
where $\sigma_0$ is the unmodulated conductivity and $\beta$ is the zero-frequency elasto-electic constant. 
We conclude that the acoustic wave leads to spatial modulation of the current and the conductivity.

Consider the flow of current in a bounded domain $\Omega \subset \mathbb{R}^n$ with a smooth boundary, $n \geq 2$. The total current 
\begin{equation}
\BJ=\BJ_\epsilon + \sigma_\epsilon \BE \ ,
\end{equation}
consists of contributions from the source and the volume, where $\BE$ is the electric field. Under static conditions, the conservation of charge takes the form
$\div\BJ=0$. In addition, $\BE=-\grad u$, where $u$ is the potential. The potential then obeys the equation
\begin{eqnarray}
\label{pde}
\div\sigma_\epsilon(\Br)\grad u_\epsilon &=& \div\BJ_\epsilon \quad {\rm in \quad \Omega} \ , \\
\frac{\partial u_\epsilon}{\partial n} &=& 0 \quad {\rm on \quad \partial\Omega} \ ,
\label{bc_u}
\end{eqnarray}
where the Neumann boundary condition prevents the outward flow of current through $\partial\Omega$. 

We now turn to the derivation of an internal functional from boundary measurements of the potential. In Section~3 we will show that it is possible to recover the current source from the internal functional. The following assumptions are imposed throughout the paper: 
\begin{enumerate}
	\item [(A-1)] The domain $\Omega$ is simply connected with $C^2$ boundary $\partial\Omega$.
	\item [(A-2)] The (unmodulated) conductivity $\sigma_0\in L^\infty(\Omega)$ is known and satisfies
	\begin{equation}\label{eq:elliptic}
	0<K_1<\sigma_0<K_2 \ ,
	\end{equation}
	for some positive constants $K_1$ and $K_2$. 
        \item[(A-3)] $\BJ_0\in (L^2(\Omega))^n$ and $\BJ_0$ is compactly supported in $\Omega$. 
\end{enumerate}
Under these assumptions, the boundary value problem~\eqref{pde} admits a unique weak solution $u_\epsilon\in H^1(\Omega)$ up to an additive constant~\cite{gilbarg2015elliptic}, satisfying 
\begin{equation}
\label{eq:reg}
\|\nabla u_\epsilon\|_{L^2(\Omega)} \leq \frac{1}{K_1} \|\BJ_\epsilon\|_{\WL{(L^2(\Omega))^n}}\,.
\end{equation}
We thus find that \WL{$\BJ_\epsilon \in (L^2(\Omega))^n$}. 

To derive the internal functional, we consider the following auxiliary boundary value problem
\begin{equation}
\label{eq:eqnv}
\begin{aligned}
    \div\sigma_0(\Br)\grad v_j &= 0  \quad &&{\rm in \quad \Omega} \,, \\
\frac{\partial v_j}{\partial n} &= g_j \quad &&{\rm on \quad \partial\Omega} \,,
\end{aligned}
\end{equation}
where $g_j\in H^{-1/2}(\partial\Omega)$, $j=1,\ldots, N$ are prescribed boundary sources. Under the assumptions (A-1) and (A-2), this auxiliary boundary value problem admits a unique weak solution $v_j\in H^1(\Omega)$ 
up to an additive constant. Since the unmodulated conductivity $\sigma_0$ is known, the solutions $v_j$ in principle can be computed. 

Next, multiplying (\ref{pde}) by $v_j$ and (\ref{eq:eqnv}) by $u_\epsilon$,
subtracting the resulting equations and integrating the difference over $\Omega$ yields
\begin{equation}
\label{identity}
\Sigma_\epsilon^{(j)} = \int_\Omega  \left[(\sigma_\epsilon - \sigma_0)\grad u_\epsilon \cdot \grad v_j  +  v_j \div \BJ_\epsilon \right] dx
\end{equation}
Here the surface
term $\Sigma_\epsilon^{(j)}$, which follows from an integration by parts, is defined by
\begin{equation}
\label{eq:bdy}
\Sigma_\epsilon^{(j)} := \int_{\partial\Omega}  \left[ u_\epsilon \sigma_0 \frac{\partial v_j}{\partial n}   
-v_j  \sigma_\epsilon \frac{\partial u_\epsilon}{\partial n}  \right] dx \ .
\end{equation}
Making use of the boundary conditions  (\ref{bc_u}) and  (\ref{eq:eqnv}), we see that
\begin{equation}
\label{eq:bdy_full}
\Sigma_\epsilon^{(j)} = \int_{\partial\Omega}  u_\epsilon \sigma_0 g_j   dx \ .
\end{equation}
Therefore $\Sigma_\epsilon^{(j)}$ can be determined from boundary measurement of $u_\epsilon$. 

We now introduce the asymptotic expansions for $u_\epsilon$ and $\Sigma^{(j)}_\epsilon$ as
\begin{eqnarray}
u_\epsilon &=& u_0 + \epsilon u_1  + \cdots \ ,\\
\Sigma^{(j)}_\epsilon &=& \Sigma^{(j)}_0 + \epsilon \Sigma^{(j)}_1 + \cdots \ ,
\end{eqnarray}
which we substitute into (\ref{identity}).
At $\mathcal{O}(1)$ we obtain
\begin{equation}
    \begin{aligned}
        \Sigma^{(j)}_0 = \int_\Omega   \ v_j \div \BJ_0  \, dx\,,    
    \end{aligned}
\end{equation}
and at $\mathcal{O}(\epsilon)$ we have
\begin{equation}
\label{fourier}
\begin{aligned}
    \Sigma^{(j)}_1 =\int_\Omega  \left(\beta\sigma_0 \grad u_0 - \BJ_0 \right)\cdot\grad v_j \cos(\Bk\cdot 
    \Bx + \varphi) \, dx .
\end{aligned}
\end{equation}
Here we have inserted (\ref{def_J}) into~\eqref{identity}, performed a further integration by parts, and then applied the assumption that $\BJ_0$ vanishes on $\partial\Omega$. Since $\Sigma^{(j)}_\epsilon$ is determined by the boundary measurement, $\Sigma^{(j)}_1$ is known. Provided the experiment is repeated with different $\bk$ and $\varphi$, it follows from
(\ref{fourier}) that by inverting a Fourier transform, we can recover the  internal functional
\begin{equation}
\label{eq:int_scalar}
H_j := \left(\beta\sigma_0 \grad u_0 - \BJ_0 \right)\cdot\grad v_j 
\end{equation}
at every point in $\Omega$.
\begin{remark}
Despite the fact that the solutions $u_\epsilon$ and $v_j$ are known up to an additive constant, the internal functional $H_j$ is unique.
\end{remark}

\section{Inverse Problem}
\label{sec:inv}
It follows from the above, that the inverse problem consists of recovering the (unmodulated) source current $\BJ_0$ from the internal functional $H_j$. In this section we will derive a reconstruction procedure that uniquely recovers $\BJ_0$ with Lipschitz stability. The following hypothesis is necessary throughout:
\begin{hypo}\label{lem:indep}
There exist $g_j$, $j=1,\cdots,N$ such that the gradients of the solutions  $v_j \in C^1(\overline{\Omega})$ to the auxiliary problems~\eqref{eq:eqnv} form a basis everywhere in $\Omega$. That is
\begin{equation}\label{eq:indep}
\det[\grad v_1,\cdots, \grad v_N] \neq 0 \quad {\rm in} \quad \overline{\Omega} \ .
\end{equation}
\end{hypo}
This hypothesis holds at least for sufficiently regular conductivity $\sigma_0$. 

\begin{prop}
Let  $N\geq2$. Under the assumptions (A-1) and (A-2), if $\sigma_0\in H^{\frac{N}{2} + 2 +s}(\Omega)$ for some $s>0$, there exist $N$  (complex-valued) solutions $v_1,\cdots v_N\in C^1(\overline{\Omega})$ to \eqref{eq:eqnv} with pointwise linearly independent gradients.
\end{prop}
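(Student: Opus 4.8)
The plan is to construct the required solutions by complex geometric optics (CGO). Here the determinant in~\eqref{eq:indep} is that of a square matrix, so I read $N=n$; the construction produces $n$ solutions whose gradients form a pointwise basis of $\mathbb{C}^n$. First I would remove the lower-order term in~\eqref{eq:eqnv} by the Liouville substitution $w=\sqrt{\sigma_0}\,v$: a direct computation shows that $\div\sigma_0\grad v=0$ is equivalent to $(\Delta-q)w=0$ with $q=\sigma_0^{-1/2}\Delta\sigma_0^{1/2}$. The hypothesis $\sigma_0\in H^{N/2+2+s}$ together with the ellipticity bound (A-2) gives $\sigma_0^{\pm1/2}\in H^{N/2+2+s}$ and hence $q\in H^{N/2+s}$; since $N/2+s>n/2$ this embeds into $L^\infty$, so $q$ is an admissible potential. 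After extending $\sigma_0$, and thus $q$, to a compactly supported function on $\mathbb{R}^n$ of the same regularity, the task reduces to building CGO solutions of $(\Delta-q)w=0$ and restricting them to $\Omega$.

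Next I would invoke the standard CGO construction: for each complex vector $\rho\in\mathbb{C}^n$ with $\rho\cdot\rho=0$ and $|\rho|$ large there is a solution $w_\rho=e^{\rho\cdot x}(1+\psi_\rho)$, where the remainder obeys semiclassical estimates forcing $\|\psi_\rho\|_{C^0(\overline\Omega)}\to0$ and $|\rho|^{-1}\|\grad\psi_\rho\|_{C^0(\overline\Omega)}\to0$ as $|\rho|\to\infty$. Returning to the original equation, $v_\rho=\sigma_0^{-1/2}w_\rho\in C^1(\overline\Omega)$ solves~\eqref{eq:eqnv} with Neumann data read off from $\partial v_\rho/\partial n$, and a short computation gives
\begin{equation*}
\frac{\grad v_\rho(x)}{|\rho|\,e^{\rho\cdot x}}
=\sigma_0(x)^{-1/2}\,\hat\rho+o(1)\qquad\text{uniformly in }x\in\overline\Omega,
\end{equation*}
where $\hat\rho=\rho/|\rho|$; the subprincipal contributions from $\grad\psi_\rho$ and from $\grad\sigma_0^{-1/2}$ are $o(1)$ precisely because the Liouville step absorbs two derivatives of the available smoothness.

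To produce the basis I would fix $N$ isotropic directions $\hat\rho_1,\dots,\hat\rho_N\in\mathbb{C}^n$ that are linearly independent over $\mathbb{C}$, which is possible because the isotropic cone $\{\rho\cdot\rho=0\}$ spans $\mathbb{C}^n$ (for instance $(1,\pm i)$ already form a basis when $n=2$); this is also why the solutions must be taken complex-valued. Forming the matrix with columns $\grad v_{\rho_j}$ and factoring the nonzero scalars $|\rho_j|e^{\rho_j\cdot x}$ out of each column, the uniform estimate above yields
\begin{equation*}
\det[\grad v_{\rho_1},\dots,\grad v_{\rho_N}](x)
=\Big(\prod_{j}|\rho_j|\,e^{\rho_j\cdot x}\Big)\Big(\sigma_0(x)^{-N/2}\det[\hat\rho_1,\dots,\hat\rho_N]+o(1)\Big).
\end{equation*}
Since $\sigma_0$ is bounded above and below and $\det[\hat\rho_1,\dots,\hat\rho_N]\neq0$, the principal term is bounded away from zero uniformly on $\overline\Omega$; hence for all $|\rho_j|$ large enough the determinant is nonzero at every point of $\overline\Omega$ simultaneously, which is the claim.

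The hard part will be the uniform $C^1(\overline\Omega)$ control of the CGO remainder with the correct power of $|\rho|$. The usual CGO bounds are stated in weighted or semiclassical Sobolev spaces, so the real work lies in upgrading them to $C^1$ via a semiclassical Sobolev embedding $H^{N/2+1+s}_{\mathrm{scl}}\hookrightarrow C^1_{\mathrm{scl}}$, which requires a potential in $H^{N/2+s}$; tracing this requirement back through the Liouville transform is exactly what pins down the threshold $\sigma_0\in H^{N/2+2+s}$. Once this uniform estimate is established, no covering or Runge-approximation argument is needed: a single choice of large complex frequencies delivers pointwise linear independence throughout $\overline\Omega$ at once.
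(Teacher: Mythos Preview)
Your proposal is correct and follows essentially the same route as the paper: Liouville transform to $(\Delta-q)w=0$, CGO solutions $e^{\rho\cdot x}(1+\psi_\rho)$ with $\rho\cdot\rho=0$, the $C^1$ remainder estimate (which the paper obtains by citing \cite[Lemma~2.1]{BalGuoMonard2014} rather than deriving), and the determinant expansion. The only cosmetic difference is that the paper makes the choice of isotropic directions explicit---pairing $e_{2m-1}\pm i e_{2m}$ into $2\times2$ (and one $3\times3$) blocks so that $|\det[\hat\rho_1,\dots,\hat\rho_N]|\ge2$---whereas you simply assert that a spanning set of isotropic directions exists.
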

\begin{proof}
It has been established in~\cite[Lemma 2.1]{BalGuoMonard2014} that for $N\geq 2$, the equation~\eqref{eq:eqnv} admits complex-valued solutions of the form
\begin{equation}\label{eq:CGO}
v_j(\bx;\rho_j) = \frac{1}{\sqrt{\sigma_0(\bx)}}e^{\rho_j\cdot \bx}(1+\psi_{\rho_j}(\bx) )\,,
\end{equation}
where $\rho_j\in\mathbb{C}^N$ is a complex parameter with $\rho_j \cdot\rho_j=0$ and the function $\psi_{\rho_j}$ satisfies the estimate
\begin{equation}\label{eq:corrector}
|\rho_j| \|\psi_{\rho_j}\|_{C^0(\overline \Omega)} + \|\psi_{\rho_j}\|_{C^{1}(\overline\Omega)} \leq C\left\|\frac{\Delta\sqrt{\sigma_0}}{\sqrt{\sigma_0}}\right\|_{H^{\frac{N}{2}+s}(\Omega)}
\end{equation}
for some constant $C = C(\Omega, s )> 0$. The right-hand-side of~\eqref{eq:corrector} is finite as a result of the assumptions (A-2) and $\sigma_0\in H^{\frac{N}{2} + 2 +s}(\Omega)$. Observe that the gradient of $v_j$ is
$$
\nabla v_j = \frac{e^{\rho_j \cdot \bx}}{\sqrt{\sigma_0}}\left(\rho_j +\rho_j \psi_{\rho_j}+\nabla\psi_{\rho_j}-\frac{1}{2}(1+\psi_{\rho_j})\sigma_0^{-\frac{3}{2}}\nabla \sigma_0\right),
$$
then
$$
\begin{aligned}
\det\left[ \nabla v_1,\cdots, \nabla v_N \right] &= \prod_{j=1}^N|\rho_j|\frac{\exp({\sum_{j=1}^N \rho_j\cdot \bx})}{\sigma_0^{N/2}}\\
\times\Big( &\det\left\{  \frac{\rho_1}{|\rho_1|},\cdots,  \frac{\rho_N}{|\rho_N|} \right\} + O\big(\max_{1\leq j\leq N}\left\{\frac{1}{|\rho_j|}\right\}\big) \Big).
\end{aligned}
$$
In particular, if we choose vectors $\rho_j$ as
\begin{equation}
\begin{aligned}
\rho_{2m-1} &= \frac{\sqrt{2}}{2} |\rho_{2m-1}|( e_{2m-1}+ \textit{i}\, e_{2m}),\\ \rho_{2m} &= \frac{\sqrt{2}}{2}|\rho_{2m}|(e_{2m-1} - \textit{i}\,  e_{2m}),
\end{aligned}
\end{equation}
for $m=1,\cdots, \lfloor \frac{N }{2}\rfloor$, where $\textit{i}= \sqrt{-1}$ is the imaginary unit and $e_j$ denotes the unit vector whose $j$th component is $1$ and other components are $0$, then the determinant $\det\left[ \nabla v_1,\cdots,\nabla v_N \right]$ is bounded away from zero uniformly when $\min_{1\le j\le N}|\rho_j|$ is sufficiently large. 
{This is true because the matrix $\left[\frac{\rho_1}{|\rho_1|},\cdots,  \frac{\rho_N}{|\rho_N|}  \right]$ is blockwise diagonal with blocks of the form
$$
A = \begin{bmatrix}1&\textit{i}\\1&-\textit{i} \end{bmatrix}, \quad B = \begin{bmatrix}1&\textit{i}&0\\1&-\textit{i} &0\\0&1&\textit{i}\end{bmatrix}\ .
$$
If $N$ is even, then the matrix contains $\frac{N}{2}$ blocks of $A$, and if $N$ is odd, then the matrix has $(\lfloor\frac{N}{2}\rfloor - 1))$ blocks of $A$ and one block of $B$. Since $\det(A) = -2\textit{i}$, $\det(B) = 2$, we obtain $\big|\det\left\{  \frac{\rho_1}{|\rho_1|},\cdots,  \frac{\rho_N}{|\rho_N|} \right\}\big| \geq 2$. }
The boundary potential sources $g_j$ in Hypothesis~\ref{lem:indep} then can be taken as $g_j := \partial_n v_j|_{\partial\Omega}$, $j=1,\dots,N$.

\end{proof}

Let $v_1, \dots, v_N$ be the auxiliary solutions in Hypothesis~\ref{lem:indep} with linearly independent gradients, and let $H_j$ be the internal functional corresponding to $v_j$ as in~\eqref{eq:int_scalar}, $j=1,\dots,N$. Then 
$$
[H_1, \dots, H_N] = (\beta\sigma_0 \nabla u_0 - \bJ_0) [\nabla v_1, \dots, \nabla v_N],
$$
where $(\beta\sigma_0 \nabla u_0 - \bJ_0)$ is viewed as a row vector. If we set
\begin{equation}\label{eq:HtoA}
\BA := [H_1, \dots, H_N] [\nabla v_1, \dots, \nabla v_N]^{-1},
\end{equation}
then 
\begin{equation}
\label{eq:internal}
\BA = \beta\sigma_0 \grad u_0 - \BJ_0\ .
\end{equation}
Since each $H_j$ is known from  boundary measurements and $v_j$ can be obtained via solving the auxiliary problem~\eqref{eq:eqnv} with the prescribed boundary potential source $g_j$, we can compute the matrix $\BA$ explicitly.
If $\beta\neq 1$, by taking the divergence of~\eqref{eq:internal} and combining the result with the equation~\eqref{pde} for $\epsilon=0$, we find that
\begin{equation}
\label{eq:JinA}
\div \BJ_0 = \frac{1}{\beta -1} \div \BA \,.
\end{equation}
Thus we can solve for $u_0$ up to an additive constant from the boundary value problem~\eqref{pde}, and then compute $\BJ_0$ using
\begin{equation}
\label{eq:recJ_0}
\BJ_0 = \beta\sigma_0\grad u_0 - \BA \,.
\end{equation}
Note that $\BJ_0$ is uniquely determined, since $u_0$ is unique up to a constant. Evidently the above procedure breaks down if $\beta =1$.

Finally, we show that the reconstruction from the internal functional $H_j$, $j=1,\dots,N$, has Lipschitz stability.

\begin{thm}\label{prop:stability}
The reconstruction \eqref{eq:recJ_0} is Lipschitz stable in the sense that if $\BJ_0$ and $\tilde{\BJ}_0$ are currents reconstructed from the corresponding internal functionals $\BA$ and $\tilde \BA$, then
\begin{equation}
\|{\BJ}_0- \tilde{\BJ}_0\|_{\WL{(L^2(\Omega))^n}} \leq \left(1 + \frac{|\beta|K_2}{|\beta - 1|K_1}\right) \|\BA-\tilde{\BA}\|_{\WL{(L^2(\Omega))^n}} .
\end{equation}
\end{thm}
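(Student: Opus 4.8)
The plan is to exploit the linearity of the entire reconstruction pipeline, so that the difference $\BJ_0-\tilde\BJ_0$ is governed by the same structure as a single reconstruction but driven only by the data difference $\BA-\tilde\BA$. Writing $\delta\BJ_0 := \BJ_0-\tilde\BJ_0$, $\delta\BA := \BA-\tilde\BA$, and $\delta u_0 := u_0-\tilde u_0$, I would first subtract the two instances of the reconstruction formula~\eqref{eq:recJ_0} to obtain the pointwise identity
\begin{equation*}
\delta\BJ_0 = \beta\sigma_0\grad(\delta u_0) - \delta\BA \, ,
\end{equation*}
and then apply the triangle inequality in $(L^2(\Omega))^n$ to get
\begin{equation*}
\|\delta\BJ_0\|_{(L^2(\Omega))^n} \leq |\beta|\,\|\sigma_0\grad(\delta u_0)\|_{(L^2(\Omega))^n} + \|\delta\BA\|_{(L^2(\Omega))^n} \, .
\end{equation*}
The whole problem is thereby reduced to controlling the gradient term $\|\sigma_0\grad(\delta u_0)\|$ in terms of $\|\delta\BA\|$.

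To bound that term, I would observe that $u_0$ and $\tilde u_0$ each solve the Neumann problem~\eqref{pde} at $\epsilon=0$ with respective sources $\div\BJ_0=\tfrac{1}{\beta-1}\div\BA$ and $\div\tilde\BJ_0=\tfrac{1}{\beta-1}\div\tilde\BA$, cf.~\eqref{eq:JinA}. By linearity, their difference $\delta u_0$ solves the homogeneous-Neumann problem
\begin{equation*}
\div(\sigma_0\grad(\delta u_0)) = \tfrac{1}{\beta-1}\,\div(\delta\BA) \quad\text{in }\Omega, \qquad \partial_n(\delta u_0) = 0 \quad\text{on }\partial\Omega \, .
\end{equation*}
This is precisely a problem of the form treated by the a priori estimate~\eqref{eq:reg}, now with the effective source current $\tfrac{1}{\beta-1}\delta\BA$ playing the role of $\BJ_\epsilon$ and $\sigma_0$ playing the role of $\sigma_\epsilon$. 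Applying~\eqref{eq:reg} thus yields
\begin{equation*}
\|\grad(\delta u_0)\|_{(L^2(\Omega))^n} \leq \frac{1}{K_1}\left\|\frac{1}{\beta-1}\delta\BA\right\|_{(L^2(\Omega))^n} = \frac{1}{|\beta-1|\,K_1}\|\delta\BA\|_{(L^2(\Omega))^n} \, .
\end{equation*}

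Combining the two displays and using the upper bound $\sigma_0<K_2$ from (A-2), the gradient term obeys $\|\sigma_0\grad(\delta u_0)\|\leq \tfrac{K_2}{|\beta-1|K_1}\|\delta\BA\|$, which assembles into exactly the claimed constant $1+|\beta|K_2/(|\beta-1|K_1)$. The argument is essentially elementary once linearity is invoked, and I do not anticipate a genuine obstacle; the points that demand care are (i) verifying that $\delta u_0$ genuinely inherits the homogeneous Neumann condition, so that no boundary term from the integration by parts underlying~\eqref{eq:reg} contaminates the estimate, and (ii) noting that the additive-constant ambiguity in $u_0$ and $\tilde u_0$ is harmless, since only $\grad u_0$ enters~\eqref{eq:recJ_0}. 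The remaining subtlety is purely bookkeeping, namely tracking $K_1$, $K_2$, and $|\beta-1|$ so that they combine into precisely the stated coefficient.
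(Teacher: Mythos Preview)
Your proposal is correct and follows essentially the same two-step argument as the paper: subtract the reconstruction identity and apply the triangle inequality, then observe that $\delta u_0$ solves the homogeneous Neumann problem with source $\tfrac{1}{\beta-1}\div(\delta\BA)$ and invoke the a priori estimate~\eqref{eq:reg}. The paper's proof is slightly terser (it absorbs $\sigma_0<K_2$ directly into the first inequality rather than separating it out), but the logic and the resulting constant are identical.
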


\begin{proof}
The stability estimate is easily seen in two steps. First, from (\ref{eq:internal}),
\begin{equation}
\|{\BJ}_0- \tilde{\BJ}_0\|_{\WL{(L^2(\Omega))^n}} \leq  \|\BA-\tilde{\BA}\|_{\WL{(L^2(\Omega))^n}} + |\beta| K_2 \|\nabla(u_0-\tilde u_0)\|_{\WL{(L^2(\Omega))^n}} \ .
\end{equation}
Second, combining \eqref{eq:JinA} and \eqref{pde}, we have that $u_0 - \tilde{u}_0$ satisfies
\begin{equation}
    \begin{aligned}
        \div\sigma_0(\Br)\grad (u_0 - \tilde{u}_0) &=  \frac{1}{\beta -1} \div ( \BA - \tilde{\BA}) \quad &&{\rm in \quad \Omega} \,, \\
        \frac{\partial (u_0 - \tilde{u}_0) }{\partial n} &= 0 \quad &&{\rm on \quad \partial\Omega} \,. 
        \end{aligned}
\end{equation}
Similar to the regularity property \eqref{eq:reg}, 
\begin{equation}
\|u_0-\tilde u_0\|_{H^1(\Omega)} \leq \frac{1}{|\beta - 1|K_1} \|{\BA}- \tilde{\BA}\|_{\WL{(L^2(\Omega))^n}} \ , 
\end{equation}
which leads to the conclusion.
\end{proof}

The above stability result can be restated in terms of the internal functionals $H_j$ as follows.
\begin{coro}\label{lem:stabilityH}
Let $\BJ_0$, $\tilde{\BJ}_0$, $\BA$ and $\tilde \BA$ be as in Theorem~\ref{prop:stability}, and let $H_j$ and $\tilde H_j$, $j=1,\cdots N$, be the corresponding scalar internal functions. If Hypothesis \ref{lem:indep} holds, then there exists a constant $C$, such that 
\begin{equation}
\|{\BJ}_0- \tilde{\BJ}_0\|^2_{\WL{(L^2(\Omega))^n}} \leq C \sum_{j=1}^N \|H_j-\tilde{H_j}\|^2_{L^2(\Omega)} \ .
\end{equation}
\end{coro}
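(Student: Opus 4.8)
The plan is to reduce the corollary to the matrix stability estimate already established in Theorem~\ref{prop:stability}, by controlling $\|\BA-\tilde\BA\|_{(L^2(\Omega))^n}$ in terms of the scalar functionals $H_j-\tilde H_j$. The key structural observation is that the auxiliary solutions $v_j$ depend only on the known conductivity $\sigma_0$ (through the prescribed boundary sources $g_j$), so the \emph{same} matrix $M:=[\nabla v_1,\dots,\nabla v_N]$ enters the definition~\eqref{eq:HtoA} of both $\BA$ and $\tilde\BA$. Hence, by linearity in the first factor,
\begin{equation*}
\BA-\tilde\BA = [\,H_1-\tilde H_1,\dots,H_N-\tilde H_N\,]\,M^{-1},
\end{equation*}
and the inverse gradient matrix $M^{-1}$ is the only object I need to estimate.

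Next I would establish a uniform pointwise bound on $M^{-1}$. Under Hypothesis~\ref{lem:indep} the solutions satisfy $v_j\in C^1(\overline\Omega)$, so the entries of $M(\bx)=[\nabla v_1(\bx),\dots,\nabla v_N(\bx)]$ are continuous on the compact set $\overline\Omega$, and $\det M(\bx)\neq 0$ there. Consequently $|\det M|$ attains a positive minimum while the entries attain a finite maximum, so Cramer's rule yields a constant $C_0=C_0(v_1,\dots,v_N)$ with $\|M(\bx)^{-1}\|\le C_0$ uniformly in $\bx\in\overline\Omega$, in the matrix norm induced by the Euclidean vector norm. Applying this pointwise gives
\begin{equation*}
|\BA(\bx)-\tilde\BA(\bx)| \le C_0\Big(\sum_{j=1}^N |H_j(\bx)-\tilde H_j(\bx)|^2\Big)^{1/2},
\end{equation*}
and squaring and integrating over $\Omega$ produces $\|\BA-\tilde\BA\|^2_{(L^2(\Omega))^n}\le C_0^2\sum_{j=1}^N\|H_j-\tilde H_j\|^2_{L^2(\Omega)}$.

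Finally I would combine this with Theorem~\ref{prop:stability}. Squaring the estimate there and inserting the previous bound gives
\begin{equation*}
\|\BJ_0-\tilde\BJ_0\|^2_{(L^2(\Omega))^n} \le \Big(1+\frac{|\beta|K_2}{|\beta-1|K_1}\Big)^2 C_0^2 \sum_{j=1}^N \|H_j-\tilde H_j\|^2_{L^2(\Omega)},
\end{equation*}
so the claim holds with $C=\big(1+\frac{|\beta|K_2}{|\beta-1|K_1}\big)^2 C_0^2$. The only genuine obstacle is the uniform invertibility bound $\|M^{-1}\|\le C_0$; everything else is linearity together with monotonicity of the $L^2$ norm under the pointwise inequality. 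That bound is precisely where Hypothesis~\ref{lem:indep} is used, and it is clean because the nonvanishing of the determinant is assumed on the \emph{closed} domain $\overline\Omega$ together with $C^1$ regularity, so compactness upgrades pointwise nonvanishing to a uniform lower bound on $|\det M|$; without closedness, or with merely continuous (non-$C^1$) gradients, one could not a priori rule out the determinant degenerating near $\partial\Omega$, and the constant $C$ would fail to be finite.
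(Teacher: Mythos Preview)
Your argument is correct and follows essentially the same approach as the paper: obtain a uniform pointwise bound on $\|[\nabla v_1,\dots,\nabla v_N]^{-1}\|$ from Hypothesis~\ref{lem:indep}, use it with \eqref{eq:HtoA} to control $\|\BA-\tilde\BA\|_{(L^2(\Omega))^n}$ by $\sum_j\|H_j-\tilde H_j\|_{L^2(\Omega)}^2$, and then invoke Theorem~\ref{prop:stability}. In fact you supply more detail than the paper does, explicitly justifying the uniform bound via compactness of $\overline\Omega$ and the $C^1$ regularity of the $v_j$.
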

\begin{proof}
When Hypothesis \ref{lem:indep} holds, there exists a constant $C_1$ such that
$$
|[\nabla v_1, \dots, \nabla v_N]^{-1}|_2 \leq C_1 \quad \text{for every}\quad \Bx\in\Omega\ ,
$$
where $|\cdot|_2$ is the matrix 2-norm. Thus combining \eqref{eq:HtoA} and Theorem~\ref{prop:stability}, the proof is finished.
\end{proof}

\section{Numerical Reconstruction and Validation}
\label{sec:num}
In this section we present numerical experiments to validate the proposed reconstruction procedure for $\BJ_0$. Reconstructions from both full and partial boundary measurements are reported. 


\subsection{Forward Problem}
The forward problem \eqref{pde} is solved to generate simulated measurements.
Existence and uniqueness of the solution $u_0$, up to a constant, is ensured by standard elliptic theory~\cite{gilbarg2015elliptic}. This boundary value problem is numerically solved with the \YZ{first order Lagrangian finite element method}. The finite element discretization results in a linear system of the form
$A\bx = \bb$ with $\mathrm{Ker} A = \mathrm{Span}\{\mathbf{1}\}$, where $\mathbf{1}$ is the vector whose components are all $1$'s. This linear system is then solved with the \YZ{biconjugate gradient stabilized method (BICGSTAB)}~\cite{gutknecht1993variants} by projecting the discretized solution onto the orthogonal complement $(\mathrm{Ker} A)^{\perp} = \mathbf{1}^{\perp}$.

\subsection{Measurements}
The measurements consist of the internal functionals $H_j$ in \eqref{eq:int_scalar}. To proceed, we must first compute the $v_j$ which solve \eqref{eq:eqnv} for $j=1,2$. 
The boundary sources $g_j$ must be selected so that Hypothesis \ref{lem:indep} holds. That is, 
\begin{equation}
    \det \begin{bmatrix}
    \nabla v_1 ,\nabla v_2
    \end{bmatrix} (\bx) \neq 0  \ , \quad\quad  \bx\in\Omega.
\end{equation}
Since measurements may carry noise, we would like to choose $g_1$ and $g_2$ so that the condition number of the above matrix remains small, in order to achieve stable numerical reconstruction. For example, if $g_1$ and $g_2$ are chosen such that $\|\nabla v_1\|=\|\nabla v_2\|=1$, then simple computation shows the matrix has the smallest condition number when 
$\nabla v_1 \perp \nabla v_2$. 
In the special case where $\sigma_0$ is constant, 
we can simply take the linear functions $v_j(\bx) = \mathbf{d}_j\cdot \bx$ with $\mathbf{d}_1\perp \mathbf{d}_2$.

\subsection{Optimization}
For non-constant $\sigma_0$, $g_j$ can be selected by solving the following minimax problem:
\begin{equation}
    (g_1^{\ast}, g_2^{\ast}) = \argmin_{g_1, g_2\in G} \max_{x\in\Omega} \Big|\frac{\nabla v_1}{|\nabla v_1|}\cdot \frac{\nabla v_2}{|\nabla v_2|}\Big|\,,
\end{equation}
where $G = \{g\in L^2(\partial\Omega) : \|g\|_{L^2(\partial\Omega)} = 1\}$. However, this \YZ{minimax} problem is not in the convex-concave setting~\cite{nemirovski2004prox} and cannot not be efficiently solved by minimizing the primal-dual gap~\cite{komiya1988elementary}. 
Instead, we relax the minimax problem to the following alternating minimization problem.

Suppose the medium permits a solution $v^0_1$ such that $\nabla v^0_1\neq 0$ everywhere in $\Omega$. Then we iteratively take alternating minimization steps. At the $k$th iteration, we solve
\begin{equation}
g_2^k = \argmin_{g_2^k\in G}\frac{1}{2}\int_{\Omega} |\nabla\phi \cdot \nabla v_1^{k-1} |^2 dx \quad \text{subject to}\quad\begin{cases}
\nabla\cdot \sigma_0\nabla\phi = 0\quad&\text{ in }\Omega\,,\\ 
\frac{\partial \phi}{\partial n} = g_2^k\quad &\text{ on } \partial \Omega\,.
\end{cases}
\end{equation}
Next, we set $v_2^k = \phi$ and solve
\begin{equation}
g_1^k = \argmin_{g_1^k\in G}\frac{1}{2}\int_{\Omega} |\nabla\psi \cdot \nabla v_2^{k} |^2 dx \quad \text{subject to}\quad\begin{cases}
\nabla\cdot \sigma_0\nabla\psi = 0\quad&\text{ in }\Omega\,,\\ 
\displaystyle\frac{\partial \psi}{\partial n} = g_1^k\quad &\text{ on } \partial \Omega\,,
\end{cases}
\end{equation}
and we set $v_1^k = \psi$. The iteration is terminated when either the increments in $v_1, v_2$ are smaller than a prescribed tolerance or the maximum iteration number is reached.

The above minimization consists of two convex quadratically constrained quadratic programs and we can apply the \YZ{interior point method~\cite{byrd1999interior, waltz2006interior}} to solve them.
Although the solution to this relaxed alternating minimization problem is not necessarily the solution to the original minimax problem, the boundary conditions selected from the relaxed problem do stabilize the numerical reconstruction, as is shown in the subsequent numerical examples.

\subsection{Numerical examples}
We present numerical examples with full boundary measurement in this subsection, and partial boundary measurement in the next subsection.
The Shepp-Logan head phantom for $\sigma_0$ in the rectangular computational domain $\Omega = [0.1, 0.9]\times [0, 1]$ is used to model the anatomy in an experiment, as shown in Fig~\ref{conductivity}. The values of $\sigma_0$ are assigned based on the conductivities of white matter, grey matter, cerebrospinal fluid and bone, 
where the region outside of the skull is assumed to have the same conductivity as the scalp 
In all experiments $5\%$ Gaussian random noise is added to the signal. \YZ{The MATLAB code for the following numerical examples is hosted on Github\footnote{https://github.com/lowrank/aem-isp}}.

\begin{figure}[!htb]
    \centering
    \includegraphics[scale=0.45]{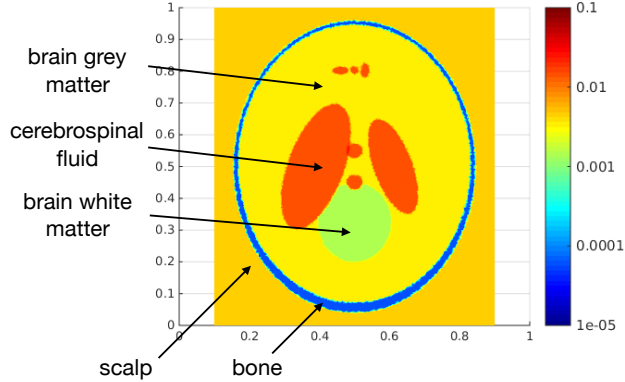}
    \caption{The conductivity function $\sigma_0$.}
    \label{conductivity}
\end{figure}

The alternating minimization algorithm is initialized with boundary sources $g_1, g_2$  of the form
\begin{equation}\label{eq:initial}
g_j(x) = \langle \cos\theta_j, \sin\theta_j \rangle \cdot n(x), \quad\theta_j\in [0,2\pi),\quad  j=1,2, \quad x\in\partial\Omega.
\end{equation}
When the difference between the angles $\theta_1$ and $\theta_2$ is relatively small, the resulting system is ill-conditioned. We thus take two different pairs of angles in the experiments: (i) $\theta_1 = 0$ and $\theta_2 = \frac{\pi}{2}$; (ii) $\theta_1 = \frac{5\pi}{6}$ and $\theta_2 = \pi$.

\subsubsection{Experiment 1: $\theta_1 = 0$ and $\theta_2 = \frac{\pi}{2}$}
The initial Neumann boundary conditions are
\begin{equation}  \label{eq:orthog}
g_1(x) = \langle 1, 0 \rangle \cdot n(x) \ , \quad\quad
g_2(x) = \langle 0, 1 \rangle \cdot n(x) \ .
\end{equation}
The choice is made to assess the performance of the alternating minimization algorithm when the gradients of $v_1$ and $v_2$ are nearly orthogonal. Note that these gradients are indeed orthogonal if $\sigma_0$ is constant.
The reconstructions are shown in Fig~\ref{case1}. The relative $L^2$ error is $2.99\%$ using the initial Neumann conditions $g_1, g_2$, and the relative $L^2$ error is $2.87\%$ using the Neumann conditions $g^*_1, g^*_2$ generated by the alternating minimization problem. The Neumann conditions $g_1, g_2, g^*_1, g^*_2$ are plotted in Fig~\ref{solutions}, where the horizontal axis represents the grid points on $\partial\Omega$. 
In this case, the initial guess $g_1, g_2$ is already very good and the optimization improves the result only to a small extent. 
\begin{figure}[!htb]
    \centering
    \includegraphics[width=0.3\textwidth, height=0.27\textwidth]{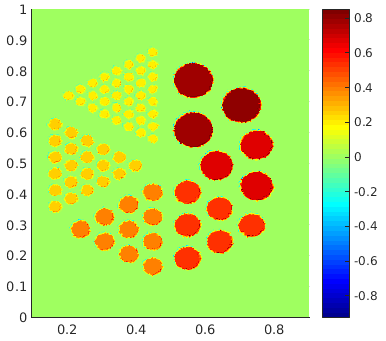}
    \includegraphics[width=0.3\textwidth, height=0.27\textwidth]{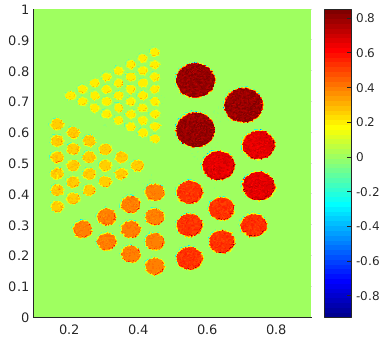}
    \includegraphics[width=0.27\textwidth, height=0.27\textwidth]{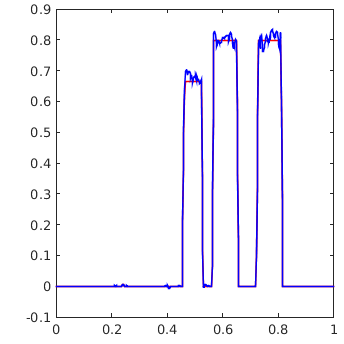}\\
    \includegraphics[width=0.3\textwidth, height=0.27\textwidth]{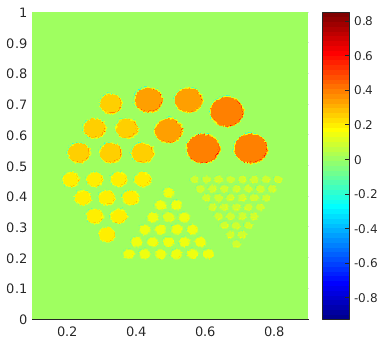}
    \includegraphics[width=0.3\textwidth, height=0.27\textwidth]{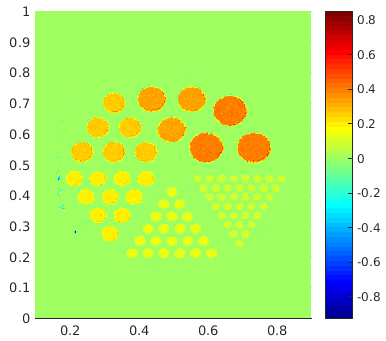}
    \includegraphics[width=0.27\textwidth, height=0.27\textwidth]{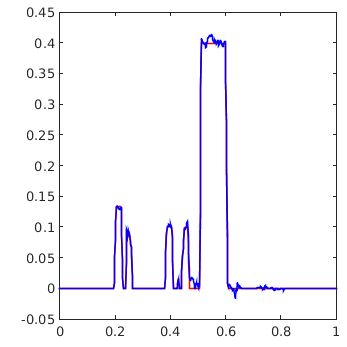}
    \caption{Experiment 1. Reconstruction with initial Neumann data corresponding to $\theta_1 = 0$ and $\theta_2 = \frac{\pi}{2}$ from full data. The first (second) row represents the first (second) component of the current density. From the left: the exact current density, the reconstructed current density, and the  exact (red) and reconstructed (blue) current density on the vertical line $x=0.6$.}
\label{case1}
\end{figure}

\begin{figure}[!htb]
    \centering
    \includegraphics[scale=0.48]{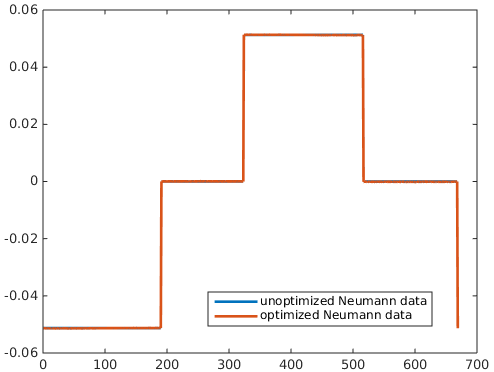} \includegraphics[scale=0.48]{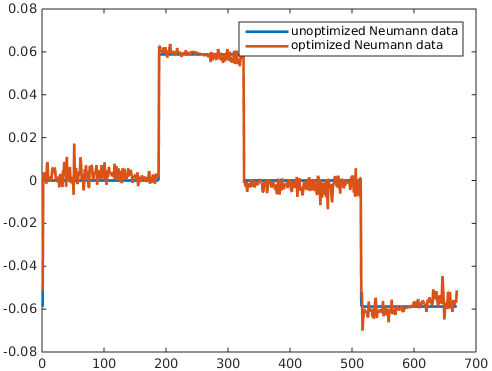} 
    \caption{Experiment 1. Initial Neumann data (corresponding to $\theta_1 = 0$ and $\theta_2 = \frac{\pi}{2}$) and optimized Neumann data. The graphs of $g_1$ and $g^*_1$ are on the left, the graphs of $g_2$ and $g^*_2$ are on the right, and the functions are plotted from the bottom left corner of the domain clockwise.}
  \label{solutions}
\end{figure}

\subsubsection{Experiment 2: $\theta_1 = \frac{5\pi}{6}$ and $\theta_2 = \pi$}
The initial Neumann boundary conditions are
\begin{equation} \label{eq:parag}
g_1(x) = \langle -\frac{\sqrt{3}}{2}, \frac{1}{2} \rangle \cdot n(x) \ , \quad\quad
g_2(x) = \langle -1, 0 \rangle \cdot n(x).
\end{equation}
This choice is made to assess the performance of the alternating minimization algorithm when the adjoint solutions $v_1$ and $v_2$ are nearly parallel. The reconstruction is shown in Fig~\ref{case2}. The relative $L^2$ error is $13.7\%$ using the initial boundary sources $g_1, g_2$, and the relative $L^2$ error is $6.10\%$ using the sources $g^*_1, g^*_2$ generated by the alternating minimization problem. The sources $g_1, g_2, g^*_1, g^*_2$ are plotted in Fig~\ref{solutions2}, where the horizontal axis represents the grid points on $\partial\Omega$. 
The alternating minimization improves the result significantly in this case.
\begin{figure}[!htb]
    \centering
    \includegraphics[width=0.3\textwidth, height=0.27\textwidth]{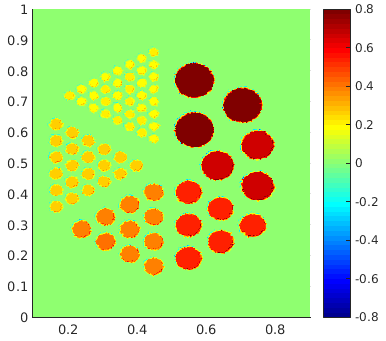}
    \includegraphics[width=0.3\textwidth, height=0.27\textwidth]{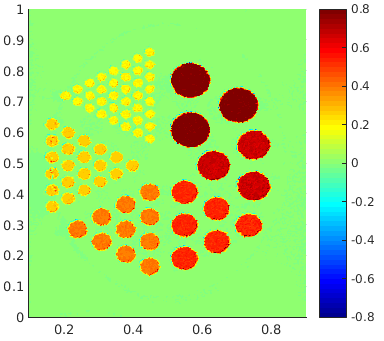}
    \includegraphics[width=0.27\textwidth, height=0.27\textwidth]{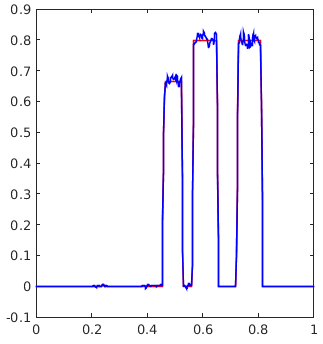}\\
    \includegraphics[width=0.3\textwidth, height=0.27\textwidth]{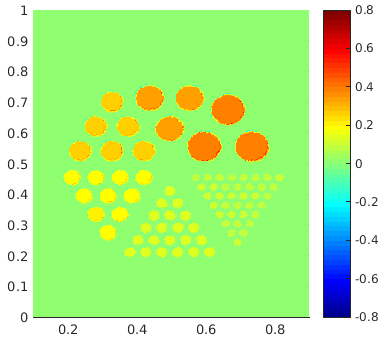}
    \includegraphics[width=0.3\textwidth, height=0.27\textwidth]{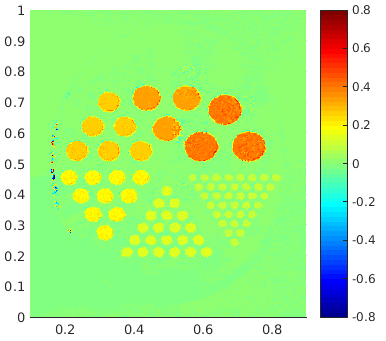}
    \includegraphics[width=0.27\textwidth, height=0.27\textwidth]{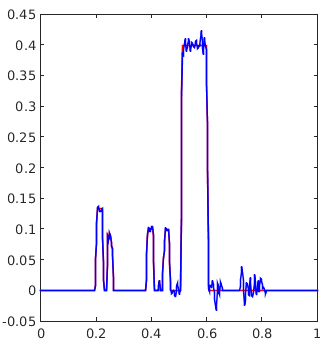}
    \caption{Experiment 2. Reconstruction with initial Neumann data corresponding to $\theta_1 = \frac{5\pi}{6}$ and $\theta_2 = \pi$ from full data. The first (second) row represents the first (second) component of the current density. From the left: the exact current density, the reconstructed current density, and the  exact (red) and reconstructed (blue) current density on the vertical line $x=0.6$.}
\label{case2}
\end{figure}

\begin{figure}[!htb]
    \centering
    \includegraphics[scale=0.48]{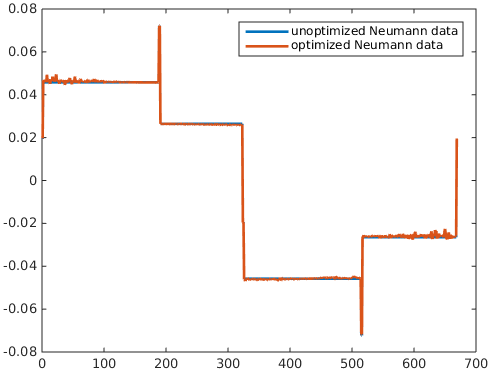} \includegraphics[scale=0.48]{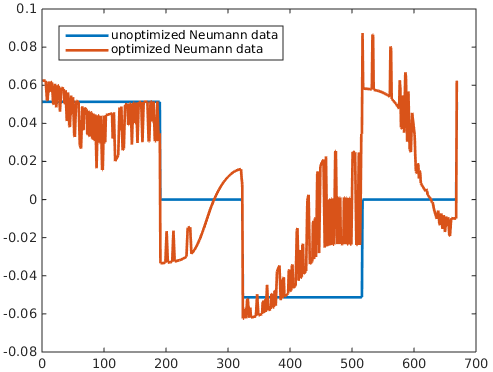} 
    \caption{Experiment 2. Initial Neumann data  (corresponding to $\theta_1 = \frac{5\pi}{6}$ and $\theta_2 = \pi$) and optimized Neumann data. The graphs of $g_1$ and $g^*_1$ are on the left, the graphs of $g_2$ and $g^*_2$ are on the right, and the functions are plotted from the bottom left corner of the domain clockwise.}
    \label{solutions2}
\end{figure}

\subsection{Partial Data}

We also tested the reconstruction algorithm for the case of partial boundary measurements, where measurements are only taken on a part of the boundary $\Gamma\subset \partial\Omega$. In this case, one can only prescribe Neumann conditions $g_1, g_2$ that are compactly supported in the interior of $\Gamma$.
It is not possible to find $v_j$, $j=1,\cdots,n$, whose gradients are uniformly linearly independent in $\Omega$ 
since the gradients are linearly dependent on the boundary. However, since $\BJ$ is compactly supported in $\Omega$, we can look for $v_j$, $j=1,\cdots,n$ whose gradients are uniformly linearly independent on $\text{supp}(\BJ)$. If such $v_j$ exist, the internal functional~\eqref{eq:internal} on $\text{supp}(\BJ)$ is available from the measurements, and the reconstruction procedure works identically from this point on.

In the following examples, measurements on the bottom edge of $\Omega = [0.1, 0.9]\times [0, 1]$ are absent, that is, $\partial\Omega\backslash\Gamma = [0.1, 0.9]\times \{0\}$. The boundary sources $g_1, g_2$ are the same as in \eqref{eq:initial}, except that they vanish on the bottom edge. We will again consider the two pairs of angles: (i) $\theta_1 = 0$ and $\theta_2 = \frac{\pi}{2}$; (ii) $\theta_1 = \frac{5\pi}{6}$ and $\theta_2 = \pi$.

\subsubsection{Experiment 3: $\theta_1 = 0$ and $\theta_2 = \frac{\pi}{2}$}

The initial boundary sources $g_1, g_2$ are as in~\eqref{eq:orthog} on $\Gamma$, and are set to be zero on $\partial\Omega\backslash\Gamma$. Note that the gradients of $v_1, v_2$ cannot be everywhere orthogonal due to the boundary constraint that $\frac{\partial v_1}{\partial n} = \frac{\partial v_2}{\partial n} = 0$ on the bottom boundary.
The reconstruction is shown in Fig~\ref{case3}. The relative $L^2$ error is $2.82\%$ using the initial  $g_1, g_2$, and the relative $L^2$ error is $2.70\%$ using the boundary sources $g^*_1, g^*_2$ generated by the alternating minimization problem. The boundary sources $g_1, g_2, g^*_1, g^*_2$ are plotted in Fig~\ref{solutions3}, where the horizontal axis represents the grid points on $\partial\Omega$.

\begin{figure}[!htb]
    \centering
    \includegraphics[width=0.3\textwidth, height=0.27\textwidth]{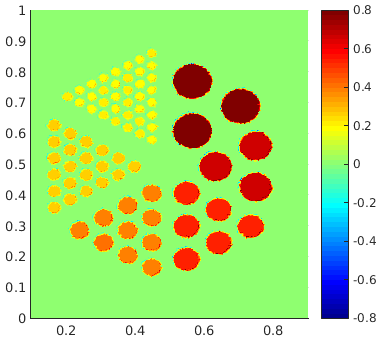}
    \includegraphics[width=0.3\textwidth, height=0.27\textwidth]{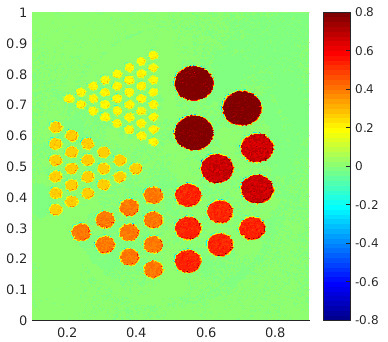}
    \includegraphics[width=0.27\textwidth, height=0.27\textwidth]{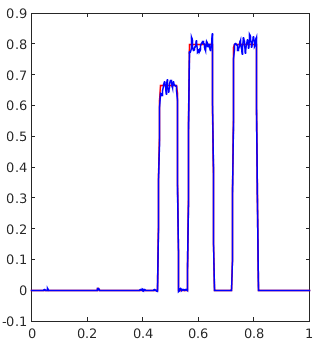}\\
    \includegraphics[width=0.3\textwidth, height=0.27\textwidth]{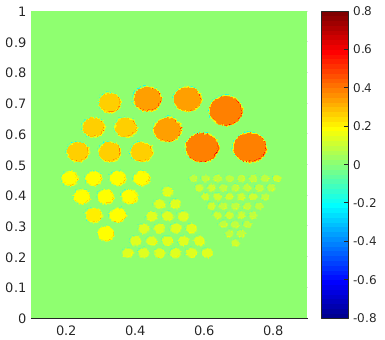}
    \includegraphics[width=0.3\textwidth, height=0.27\textwidth]{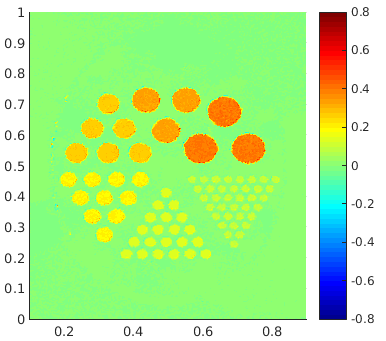}
    \includegraphics[width=0.27\textwidth, height=0.27\textwidth]{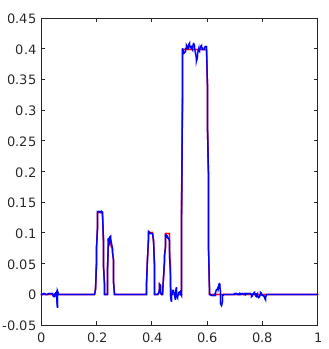}
    \caption{Experiment 3. Reconstruction with initial Neumann data corresponding to $\theta_1 = 0$ and $\theta_2 = \frac{\pi}{2}$ from partial data. The first (second) row represents the first (second) component of the current density. From the left: the exact current density, the reconstructed current density, and the  exact (red) and reconstructed (blue) current density on the vertical line $x=0.6$.}
\label{case3}
\end{figure}

\begin{figure}[!htb]
    \centering
    \includegraphics[scale=0.48]{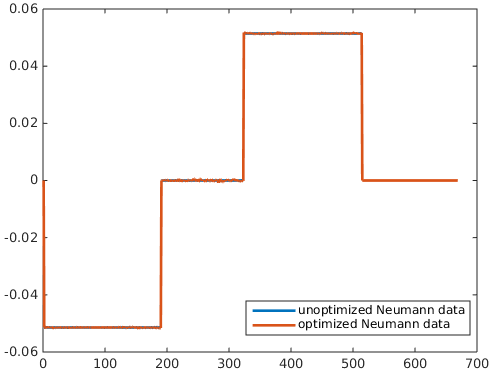} \includegraphics[scale=0.48]{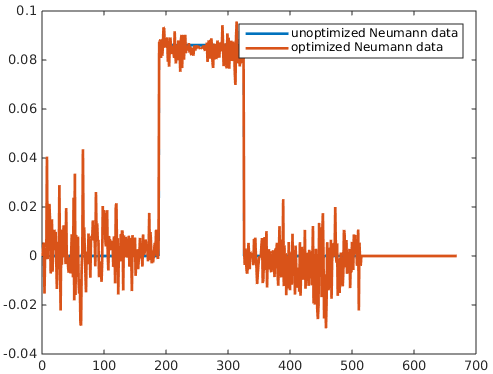} 
    \caption{Experiment 3. Initial Neumann conditions (corresponding to $\theta_1 = 0$ and $\theta_2 = \frac{\pi}{2}$) and optimized Neumann conditions with partial data. The graphs of $g_1$ and $g^*_1$ are on the left, the graphs of $g_2$ and $g^*_2$ are on the right, and the functions are plotted from the bottom left corner of the domain clockwise.}
    \label{solutions3}
\end{figure}

\subsubsection{Experiment 4: $\theta_1 = \frac{5\pi}{6}$ and $\theta_2 = \pi$}
The initial Neumann boundary conditions $g_1, g_2$ are as in~\eqref{eq:parag} on $\Gamma$, and are set to be zero on $\partial\Omega\backslash\Gamma$. The reconstruction is shown in Fig~\ref{case4}. The relative $L^2$ error is $34.1\%$ using the initial boundary sources $g_1, g_2$, and the relative $L^2$ error is $10.2\%$ using the sources $g^*_1, g^*_2$ generated by the alternating minimization problem. The sources $g_1, g_2, g^*_1, g^*_2$ are plotted in Fig~\ref{solutions4}, where the horizontal axis represents the grid points on $\partial\Omega$. Evidently, optimization improves the result significantly.

\begin{figure}[!htb]
    \centering
    \includegraphics[width=0.3\textwidth, height=0.27\textwidth]{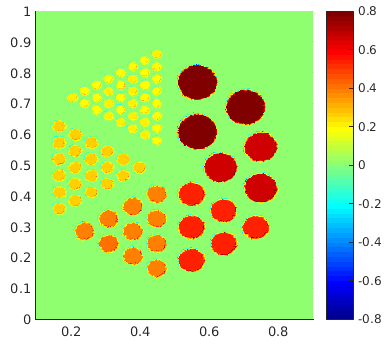}
    \includegraphics[width=0.3\textwidth, height=0.27\textwidth]{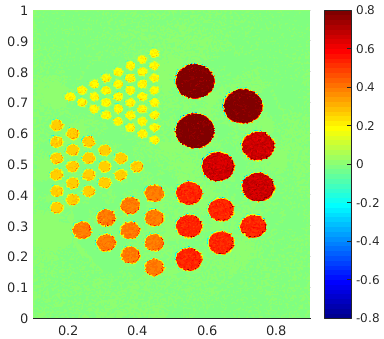}
    \includegraphics[width=0.27\textwidth, height=0.27\textwidth]{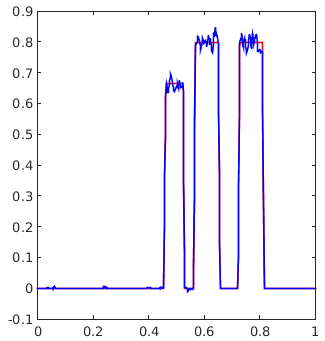}\\
    \includegraphics[width=0.3\textwidth, height=0.27\textwidth]{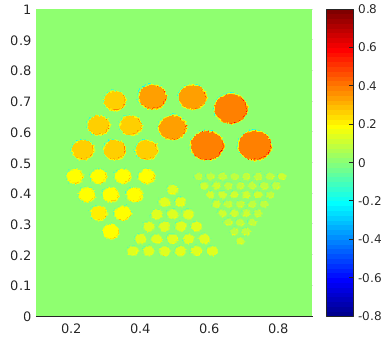}
    \includegraphics[width=0.3\textwidth, height=0.27\textwidth]{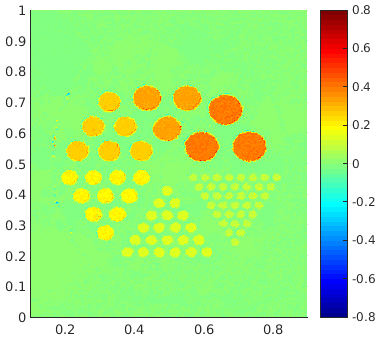}
    \includegraphics[width=0.27\textwidth, height=0.27\textwidth]{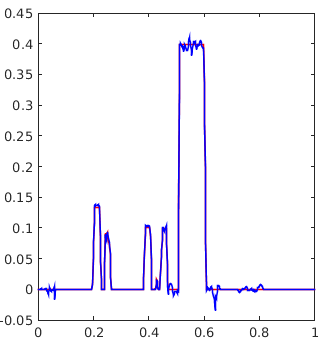}
    \caption{Experiment 4. Reconstruction with initial Neumann data corresponding to $\theta_1 = \frac{5\pi}{6}$ and $\theta_2 = \pi$ from partial data. The first (second) row represents the first (second) component of the current density. From the left: the exact current density, the reconstructed current density, and the exact (red) and reconstructed (blue) current density on the vertical line $x=0.6$.}
\label{case4}
\end{figure}

\begin{figure}[!htb]
    \centering
    \includegraphics[scale=0.479]{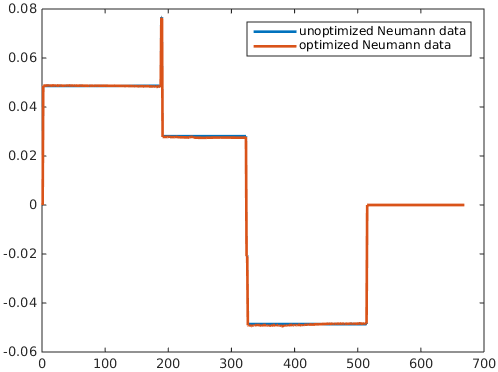} \includegraphics[scale=0.479]{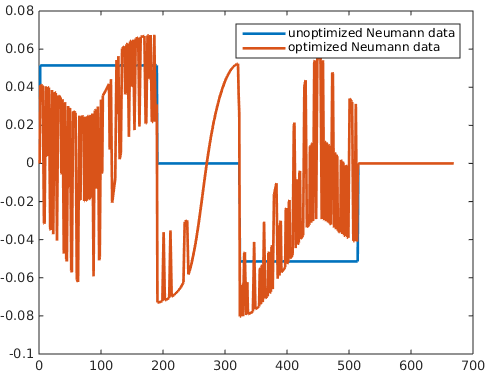} 
    \caption{Experiment 4. Initial Neumann data (corresponding to $\theta_1 = \frac{5\pi}{6}$ and $\theta_2 = \pi$) and optimized Neumann data with partial data. The graphs of $g_1$ and $g^*_1$ are on the left, the graphs of $g_2$ and $g^*_2$ are on the right, and the functions are plotted from the bottom left corner of the domain clockwise.}
    \label{solutions4}
\end{figure}

\section{Discussion}
\label{sec:conclusion}
In this paper, we formulated a mathematical model of an acoustically-modulated electrical source problem. We showed that boundary measurement of the electric potential in the presence of acoustic modulation leads to knowledge of an internal functional. Based on this observation, we devised explicit procedures to reconstruct the (unmodulated) source current $\BJ_0$ from the internal functional. The reconstruction is shown to be unique with Lipschitz stability, which serves as the mathematical justification for the elimination of non-uniqueness in the classical inverse problem. We also present numerical implementations of the proposed procedures with both full and partial boundary measurement, including an alternating minimization algorithm that improves numerical stability. We note that the model we consider holds for the case of steady currents, and is suitable for applications to low-frequency biological currents. Although this work was motivated by neurophysiologic applications, similar considerations apply to cardiac electrophysiology. In future work, we intend to explore the high-frequency regime where it is necessary to employ the apparatus of the full Maxwell system.

\section*{Acknowledgments}
JCS is indebted to his father, Donald L. Schotland, M.D., who introduced him to the field of neurophysiology. This paper is dedicated to his memory.  He would also like to acknowledge the influence of Michael J. O'Connor, M.D. for stimulating his interest in the surgical treatment of epilepsy. The work of JCS was supported in part by the NSF grant DMS-1912821 and the AFOSR grant FA9550-19-1-0320. The work of YY was supported in part by the NSF grants DMS-1715178 and DMS-2006881. 

\bibliographystyle{siam}
\bibliography{main.bib}
%
%
%
%
%
%
%

\end{document}